\newtheorem {theorem}{Theorem}[section]
\newtheorem {lemma} [theorem] {Lemma}
\newtheorem {proposition} [theorem] {Proposition}
\newtheorem {corollary} [theorem] {Corollary}
\theoremstyle{definition}
\newtheorem{remark}[theorem]{Remark}
\DeclareMathOperator{\lcm}{lcm}
\DeclareMathOperator{\G}{G}
\DeclareMathOperator{\Ind}{Ind}
\DeclareMathOperator{\Gal}{Gal}
\title{Constructive Proof of Two Characterizations of Arithmetic Equivalence}
\author{Shaver Phagan}
\address{Department of Mathematics\\ Purdue University, West Lafayette, IN}
\email{shaverphagan@gmail.com}
\begin{document}
\maketitle
\section*{Abstract}
We prove a formula governing the combinatorics of cyclic group actions and extend a lemma of Hasse to account for ramification.  Consequently, we obtain the first constructive proofs of two theorems characterizing arithmetic equivalence.  We are also able to give new proofs of strong multiplicity one type theorems for Kronecker equivalence and weak Kronecker equivalence.
\section{Introduction}
A rational prime \(p\) admits a factorization \(p\mathcal{O}_K=\mathfrak{p}_1^{e_1}\cdots\mathfrak{p}_n^{e_n}\) into prime ideals in the ring of integers \(\mathcal{O}_K\) of a number field \(K\).  Letting \(f_i=[\mathcal{O}_K/\mathfrak{p}_i:\mathbb{Z}/p\mathbb{Z}]\) be the degree of the residual extension at \(\mathfrak{p}_i\), the multiset \(S_K(p)=\{f_1,...,f_n\}\) is known as the \textbf{splitting type} of \(p\) in \(K\).  Kronecker asked whether \(K\) is determined up to isomorphism by the splitting types of all rational primes.  In other words, does equality \(S_F(p)=S_K(p)\) for all \(p\) imply an isomorphism \(F\simeq K\)?  This can be thought of as a local-to-global principle for the arithmetic of Galois conjugates of primitive generators.\\
\indent Gassmann found nonisomorphic number fields \(F\) and \(K\) for which \(S_F(p)=S_K(p)\) for all unramified rational primes \(p\) \cite{gassmann_1926}.  However, his approach does not address the finitely many ramified primes and fundamentally relies on Hasse's Lemma describing the splitting types of unramified primes via Frobenius classes.  Around fifty years later, Perlis used the functional equation of the Dedekind zeta function to argue by contradiction that equality \(S_F(p)=S_K(p)\) for cofinitely many \(p\) implies equality \(S_F(p)=S_K(p)\) for all \(p\) \cite{perlis_equation_1977} (such a theorem is loosely referred to as a \textit{strong multiplicity one type theorem}).  Hence, Perlis' argument promoted Gassmann's examples to bonafide counterexamples to Kronecker's question: there are nonisomorphic number fields \(F\) and \(K\) for which \(S_F(p)=S_K(p)\) for all \(p\).  However, Perlis' argument does not indicate how one can compute the splitting type of a ramified prime as a function of the splitting types of unramified primes, though it guarantees the existence of such a function.  Similarly, Perlis-Stuart prove in \cite{stuart_new_1995} that \(S_F(p)=S_K(p)\) at every unramified \(p\) if and only if \(|S_K(p)|=|S_F(p)|\) for all unramified primes \(p\), but they acknowledge their methods provide no formula to compute the splitting type of a prime as a function of the splitting type cardinalities.\\
\indent In this short note, we extend Hasse's Lemma to account for ramification with Proposition \ref{staction}. This amounts to packaging well-known facts in algebraic number theory into the language of cyclic group actions.  We also prove a formula describing the combinatorics of cyclic group actions on finite sets.  This is Theorem \ref{formula}.  Bringing these results together to bear on the arithmetic equivalence problem, we are able to give the first constructive proofs of the theorems of Perlis and Perlis-Stuart.  Furthermore, using the extended Hasse Lemma, we are able to give new short proofs of strong multiplicity one type theorems for Kronecker equivalence and weak Kronecker equivalence.  For the reader's convenience, we the recall the definitions of the equivalence relations considered here.  Number fields \(F\) and \(K\) are:
\begin{enumerate} 
	\item\label{q} \textbf{arithmetically equivalent} if \(S_F(p)=S_K(p)\) for all unramified \(p\)
	\item\label{k} \textbf{Kronecker equivalent} if, for all unramified \(p\), \(1\in S_F(p)\) if and only if \(1\in S_K(p)\)
	\item\label{wk} \textbf{weakly Kronecker equivalent} if \(\gcd(S_{F}(p))=\gcd(S_{K}(p))\) for all unramified \(p\).
\end{enumerate}

\subsection*{Acknowledgements}
I would like to thank my advisor, Ben McReynolds, for helpful conversation and for his support.  I would also like to thank Anurag Sahay and Zachary Selk for helpful feedback. 

\section{Combinatorics of Cyclic Group Actions}\label{group}

The main theorem of this section--and hence the main theorem of \cite{stuart_new_1995}--can be derived from results in \cite{bourque_gcd_1992}.  However, to make the exposition more self-contained, we give a proof using only elementary methods.  Let \(C\) be a finite cyclic group of order \(n\), and let \(S\) be a finite (left) \(C\)-set. Given a divisor \(m\) of \(n\), let \(C^m=\{c^m|c\in C\}\), \(M_m=|C^m\backslash S|\), and for integers \(t,k\), define 
$$
f_k(t)=\prod\limits_{p|\gcd(k,t)\text{ prime}}p^{v_p(t)},
$$
where \(v_p\) is the \(p\)-adic valuation, so that
$$
v_p(f_k(t))=\begin{cases}
v_p(t), & p|k\text{ and }p|t\\
0, & \text{otherwise}.
\end{cases}
$$
\begin{lemma}\label{tr}
If \(T|m|n\), and \(\frac{m}{T}\) is square-free, and \(\Omega\) is the prime omega function counting prime divisors of a positive integer with multiplicity, then
\begin{equation}\label{treq}
\frac{\sum\limits_{T|d|m}(-1)^{\Omega(md)}M_{d}}{\prod\limits_{q|\frac{m}{T} \text{ prime}}(q-1)}=\sum\limits_{\frac{m}{T}f_{m/T}(T)|d|n}\gcd(T,d)a_{d},
\end{equation}
where \(a_d\) is the number of elements with fiber of cardinality \(d\) in the natural projection \(S\rightarrow C\backslash S\), i.e. \(a_d\) is the number of \(C\)-orbits in \(S\) with cardinality \(d\).
\end{lemma}
\begin{proof}
We induct on \(\Omega(m/T)\).  If \(\Omega(m/T)=0\), Equation \eqref{treq} reduces to \(M_m=\sum\limits_{d|n}\gcd(m,d)a_{d}\).  By definition \(M_n=|S|=\sum\limits_{d|n}da_{d}\), and if \(\omega\in S\) has \(|C\omega|=d\), then by orbit-stabilizer \(\text{Stab}_C(\omega)=C^{d}\).  For such \(\omega\), we know that \(|C^{m}\omega|=[C^m:C^{d}\cap C^m]=\frac{\lcm(m,d)}{m},\) so 
$$
M_{m}=\sum\limits_{d|n}\frac{md}{\lcm(m,d)}a_{d}=\sum\limits_{d|n}\gcd(m,d)a_{d},
$$
and the desired formula holds.  Suppose that \(\Omega(m/T)>0\), and observe that, since \(m/T\) is square-free, if \(p|\frac{m}{T}\) we have 
$$
\sum\limits_{T|d|m}(-1)^{\Omega(md)}M_{d}=\sum\limits_{pT|d|m}(-1)^{\Omega(md)}M_{d}+\sum\limits_{T|d|\frac{m}{p}}(-1)^{\Omega(md)}M_{d},
$$
so by induction
$$
\frac{\sum\limits_{T|d|m}(-1)^{\Omega(md)}M_{d}}{\prod\limits_{q|\frac{m}{pT} \text{ prime}}(q-1)}=\sum\limits_{\frac{m}{pT}f_{m/pT}(pT)|d|n}\gcd(pT,d)a_{d}-\sum\limits_{\frac{m}{pT}f_{m/pT}(T)|d|n}\gcd(T,d)a_{d}
$$
$$
=\sum\limits_{\frac{m}{pT}f_{m/pT}(T)|d|n}(\gcd(pT,d)-\gcd(T,d))a_{d}
$$
$$
=(p-1)\sum\limits_{\frac{m}{T}f_{m/T}(T)|d|n}\gcd(T,d)a_{d}.
$$
The second equation holds because \(f_{m/pT}(pT)=f_{m/pT}(T)\), and the last equation holds because \(\gcd(pT,d)=\gcd(T,d)\) if and only if \(v_p(d)\leq v_p(T)\), while \(\gcd(pT,d)=p\gcd(T,d)\) otherwise.
\end{proof}
Given \(T,m,n\) as in the lemma and such that \(\frac{m}{T}\) is maximal square-free (note this uniquely determines \(T\)), set 
\begin{equation}\label{N}
N_m=\frac{1}{T}\frac{\sum\limits_{T|d|m}(-1)^{\Omega(md)}M_{d}}{\prod\limits_{q|\frac{m}{T} \text{ prime}}(q-1)},
\end{equation}
and observe that
\begin{equation}\label{max}
N_m=\sum_{m|d|n}a_{d}.
\end{equation}

\noindent Indeed, any prime factor of \(T\) divides \(\frac{m}{T}\), by choice of \(T\).  Therefore, \(f_{m/T}(T)=T\).  Furthermore, since in this case \(m\) divides \(d\) in the right hand side of Equation \eqref{treq}, so does \(T\), so \(\gcd(T,d)=T\).  Hence, Equation \eqref{max} follows from Lemma \ref{tr}.  We thank Anurag Sahay for suggesting the following simplifying lemma and for providing its proof.

\begin{lemma}\label{clean}
The right hand side of Equation \eqref{N} is equal to
$$
\frac{1}{\varphi(m)}\sum\limits_{d|m}\mu(d)M_{\frac{m}{d}},
$$
where \(\mu\) is the M{\"o}bius function, and \(\varphi\) is Euler's totient.
\end{lemma}
\begin{proof}
Note that the denominator is just \(T\varphi(m/T)=\varphi(m)\), since \(m/T\) is the square-free kernel of \(m\).  Furthermore
$$\Omega(md)=\Omega(T^2m'd')=\Omega(m'd'),$$
where \(m'=m/T\) and \(d'=d/T\).  But then \(m'\) and \(d'\) are square-free, so \((-1)^{\Omega(m'd')}=\mu(m')\mu(d')\), and therefore
$$N_m=\frac{1}{\varphi(m)}\sum\limits_{T|d|m}\mu(m/T)\mu(d/T)M_d$$
$$=\frac{1}{\varphi(m)}\sum\limits_{d|\frac{m}{T}}\mu(m/T)\mu(d)M_{Td}$$
$$=\frac{1}{\varphi(m)}\sum\limits_{d|\frac{m}{T}}\mu\left(\frac{m}{dT}\right)M_{Td}=\frac{1}{\varphi(m)}\sum\limits_{d|\frac{m}{T}}\mu(d)M_{\frac{m}{d}},$$
where the second-to-last equation holds by multiplicativity of \(\mu\), since \(m/T\) is square-free and \(\mu(d)^2=1\).  The claim follows since the last sum is over the divisors of the square-free kernel of \(m\), which are precisely the support of \(\mu\) on the divisors of \(m\).
\end{proof}
\begin{theorem}\label{formula}
If \(m\) is a divisor of \(n\), then
$$
a_m=\sum\limits_{d|\frac{n}{m}}\frac{\mu(d)}{\varphi(md)}\sum\limits_{\delta|md}\mu(\delta)M_{\frac{md}{\delta}}.
$$
\end{theorem}
\begin{proof}
Let \(f(m)=N_{\frac{n}{m}}\) and \(g(m)=a_{\frac{n}{m}}\).  Equation \eqref{max} can be rewritten
$$
f(m)=\sum\limits_{d|m}g(d).
$$
Applying the M{\"o}bius inversion formula and Lemma \ref{clean} proves the claim.
\end{proof}

We now give a constructive proof of the Perlis-Stuart Theorem.  In the proof, we treat an \(\mathbb{N}\)-valued multiset \(f\) as a function \(f:\mathbb{N}\rightarrow \mathbb{N}\) and denote by \(|f|\) the quantity \(\sum_nf(n)\).  For instance \(f=\{1,2,2\}\) means \(f(1)=1,f(2)=2\), \(f(n)=0\) for \(n\neq1,2\), and \(|f|=3\).
\begin{corollary}[\cite{stuart_new_1995} Main Theorem]
Number fields \(F\) and \(K\) are arithmetically equivalent if and only if \(|S_p(F)|=|S_p(K)|\) for all unramified rational primes \(p\).
\end{corollary}
\begin{proof}
We need only prove the nontrivial direction, and for this it suffices to show that one can recover \(S_p(F)\) for any unramified \(p\) from knowledge of \(|S_p(F)|\) for all unramified \(p\).  By the Hasse Lemma \eqref{hasse}, we know that \(|S_p(F)|=|C_p\backslash G/G_F|\), where \(G\) is the group of a Galois extension containing \(F\), and \(C_p\) is the cyclic subgroup of \(G\) generated by a Frobenius element for \(p\).  But then if \(n=|C_p|\), for each divisor \(d\) of \(n\), Chebotarev density guarantees there is an unramified prime \(p_d\) such that \(C_{p_d}=C_p^d\), so by Theorem \ref{formula} and the Hasse Lemma, we have
$$
S_p(F)(m)=\sum\limits_{d|\frac{n}{m}}\frac{\mu(d)}{\varphi(md)}\sum\limits_{\delta|md}\mu(\delta)|S_{p_{md/\delta}}(F)|.
$$
\end{proof}
As a further application of Theorem \ref{formula}, we indulge in an efficient proof of a special property of the representation theory of cyclic groups, which amounts to injectivity of the map \(\Omega C\rightarrow\mathcal{R}_\mathbb{Q} C\) from the Burnside ring of \(C\) to the rational representation ring of \(C\).  This homomorphism has been heavily studied for finite groups, and a full classification of the kernel was recently accomplished in \cite{bartel_brauer_2015}.
\begin{corollary}
If \(C\) is a finite cyclic group and \(S_1\) and \(S_2\) are finite \(C\)-sets such that \(\mathbb{Q}S_1\simeq\mathbb{Q}S_2\) as \(C\)-representations, then \(S_1\) and \(S_2\) are isomorphic as \(C\)-sets.
\end{corollary}
\begin{proof}
The isomorphism \(\mathbb{Q}S_1\simeq\mathbb{Q}S_2\) implies \(|C^d\backslash S_1|=|C^d\backslash S_2|\) for any \(d\), for instance, by Burnside's Lemma.  It follows from Theorem \ref{formula} that \(S_1\) and \(S_2\) have the same orbit decomposition under the \(C\)-action, so they are isomorphic as \(C\)-sets, since \(C\) is cyclic.
\end{proof}

\section{The Extended Hasse Lemma and its Applications}\label{galois}
\subsection{Notation} Given a place \(\nu\) of a number field \(F\), we denote the residue field at \(\nu\) by \(\kappa(\nu)\). If \(K/F\) is a finite extension (not necessarily Galois) with \(\omega\) a place of \(K\) over \(\nu\), we denote the group of the cyclic extension \(\kappa(\omega)/\kappa(\nu)\) by \(\mathfrak{g}_{\omega/\nu}\). Recall that \(f\) is a residual degree of \(K\) over \(\nu\) if and only if there is \(\omega\) over \(\nu\) such that \(|\mathfrak{g}_{\omega/\nu}|=f\). If \(K/F\) is Galois with group \(G\), we identify \(\text{Gal}(K_\omega/F_\nu)\) with the decomposition group \(\mathcal{D}_{\omega/\nu}\subset G\). Recall that the inertia subgroup \(\mathcal{I}_{\omega/\nu}\) is defined as the kernel of the canonical epimorphism \(\mathcal{D}_{\omega/\nu}\rightarrow\mathfrak{g}_{\omega/\nu}\), and if \(F'\) is a subfield of \(K\) containing \(F\), with a place \(\eta\) divisible by \(\omega\), then \(\mathcal{D}_{\omega/\eta}=\mathcal{D}_{\omega/\nu}\cap G_{F'}\) and \(\mathcal{I}_{\omega/\eta}=\mathcal{I}_{\omega/\nu}\cap G_{F'}\), where \(G_{F'}=\Gal(K/F')\) (c.f. Proposition 22.a in Chapter I of \cite{serre_local_1979}).  To simplify notation, given a Galois extension \(K/\mathbb{Q}\) with group \(G\) and a place \(\omega\) of \(K\) over \(p\), we will write \(\mathcal{D}, \mathcal{I}, \mathfrak{g}\), respectively, instead of \(\mathcal{D}_{\omega/p}, \mathcal{I}_{\omega/p}, \mathfrak{g}_{\omega/p}\), so long as it won't lead to confusion. Furthermore, we arrange that \(\mathcal{D}=\mathcal{I}C\), where \(C\) is the cyclic group generated by some preimage \(c\) of a generator of \(\mathfrak{g}\) under the projection \(\mathcal{D}\rightarrow\mathfrak{g}\). Finally, for \(g\in\G\), we write \(g^G\) for the conjugacy class of \(g\) in \(G\).
\subsection{Extended Hasse Lemma}
When \(p\) is unramified, so that \(\mathcal{D}=C\), a well-known lemma of Hasse says the splitting type of \(p\) in a subfield \(F\) of \(K\) is given by 
\begin{equation}\label{hasse}S_F(p)=\left\{\frac{|Cg_iG_F|}{|G_F|}\right\}_{i=1}^n,\end{equation}
where \(g_1,...,g_n\in G\) are representatives for the double coset space \(C\backslash G/G_F\) (c.f. \cite{hasse_zahlbericht_1970} II Section 2.3).   The following proposition describes the splitting type of an arbitrary rational prime in terms of a cyclic group action and reduces to Hasse's Lemma at an unramified prime.  In the proof, we use the notation \(H^g=g^{-1}Hg\) for \(H\subset G, g\in G\).  

\begin{proposition}[Extended Hasse Lemma]\label{staction}
Let \(F\) be a number field contained in a finite Galois extension \(K/\mathbb{Q}\) with group \(G\).  If \(\mathcal{D}\) is a decomposition group fixing some place \(\omega\) of \(K\) over a rational prime \(p\), then \(S_p(F)\) is naturally identified with the multiset of cardinalities of fibers in the natural projection \(\mathcal{I}\backslash G/G_F\rightarrow\mathcal{D}\backslash G/G_F\).
\end{proposition}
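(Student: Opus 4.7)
The plan is to fix a place $\omega$ of $N$ above $\nu$ with decomposition group $\mathcal{D}$ and inertia group $\mathcal{I}$, and then reduce the splitting-type claim to a purely group-theoretic counting problem. The starting point is the classical bijection (a consequence of $G$ acting transitively on the places of $N$ above $\nu$ with stabilizer $\mathcal{D}$) between the places of $E$ above $\nu$ and the double cosets $\mathcal{D}\backslash G/G_E$: the double coset $\mathcal{D}gG_E$ corresponds to the place $\omega_g'$ of $E$ lying below $g^{-1}\omega$, because the $G_E$-orbit of $g^{-1}\omega$ inside the places of $N$ above $\nu$ lies in a single place of $E$.

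Next I would compute the residue degree of $\omega_g'$. Since $g^{-1}\omega$ has decomposition and inertia groups $g^{-1}\mathcal{D}g$ and $g^{-1}\mathcal{I}g$ inside $G$, the corresponding groups relative to $N/E$ are their intersections with $G_E$. Multiplicativity of $e$ and $f$ in the tower $F \subseteq E \subseteq N$, combined with $f(\omega|\nu) = [\mathcal{D}:\mathcal{I}]$ and a conjugation by $g$, gives
\[
[\kappa(\omega_g'):\kappa(\nu)] = \frac{[\mathcal{D}:\mathcal{I}]}{[\mathcal{D}\cap gG_Eg^{-1}:\mathcal{I}\cap gG_Eg^{-1}]}.
\]

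Finally, I would compute the size of the fiber above $\mathcal{D}gG_E$ in the projection $\mathcal{I}\backslash G/G_E \to \mathcal{D}\backslash G/G_E$. This fiber is the set of $\mathcal{I}$-orbits contained in the single $\mathcal{D}$-orbit $\mathcal{D}gG_E/G_E \subseteq G/G_E$. Writing $H = \mathcal{D}\cap gG_Eg^{-1}$ for the $\mathcal{D}$-stabilizer of the coset $gG_E$, the orbit is $\mathcal{D}$-equivariantly isomorphic to $\mathcal{D}/H$, so the fiber size equals $|\mathcal{I}\backslash\mathcal{D}/H|$. Since $\mathcal{I}$ is normal in $\mathcal{D}$, this double-coset count collapses to
\[
[\mathcal{D}:\mathcal{I}H] \;=\; \frac{[\mathcal{D}:\mathcal{I}]}{[H:\mathcal{I}\cap H]},
\]
and the identity $\mathcal{I}\cap H = \mathcal{I}\cap gG_Eg^{-1}$ (using $\mathcal{I}\subseteq\mathcal{D}$) makes this match the residue degree above exactly, so the two multisets coincide.

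The main obstacle is really just bookkeeping: keeping conventions straight when passing between $G_E$-orbits on $G/\mathcal{D}$ and $\mathcal{D}$-orbits on $G/G_E$, and tracking where the inverse $g^{-1}$ versus conjugation by $g$ appears. The only substantive algebraic input is the normality of $\mathcal{I}$ inside $\mathcal{D}$, which is what collapses the double-coset count in the fiber to a clean index formula.
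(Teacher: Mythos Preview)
Your argument is correct and follows the same overall strategy as the paper: identify the places of $E$ above $\nu$ with $\mathcal{D}\backslash G/G_E$, then match the residue degree attached to each double coset with the size of the corresponding fiber of $\mathcal{I}\backslash G/G_E \to \mathcal{D}\backslash G/G_E$. The algebraic bookkeeping in the fiber computation is where the two diverge. The paper lifts a generator of the cyclic quotient $\mathcal{D}/\mathcal{I}$ to a cyclic subgroup $C\le\mathcal{D}$ with $\mathcal{D}=C\mathcal{I}$, lets $C$ act on $\mathcal{I}\backslash G/G_E$, and computes each fiber as a $C$-orbit via the Orbit--Stabilizer Theorem, recognizing the stabilizer (after conjugation by $g$) as the kernel of the composite $\mathcal{D}^g\to\Gal(\kappa(g^{-1}\omega)/\kappa(\nu))\to\Gal(\kappa(\eta)/\kappa(\nu))$. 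You instead use the normality of $\mathcal{I}$ in $\mathcal{D}$ directly to collapse $|\mathcal{I}\backslash\mathcal{D}/H|$ to the index $[\mathcal{D}:\mathcal{I}H]=[\mathcal{D}:\mathcal{I}]/[H:\mathcal{I}\cap H]$, and match it against the residue degree obtained from multiplicativity of $f$ in the tower $F\subseteq E\subseteq N$. Your route avoids the auxiliary choice of $C$ and is slightly tidier; the paper's route via the cyclic lift makes the Frobenius action on the fibers more visible.
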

\begin{proof}
The cyclic group \(C\) admits a natural left action on the coset space \(S=\mathcal{I}\backslash G/G_F\), whose orbit space is \(\mathcal{O}=\mathcal{D}\backslash G/G_F\).  The Orbit-Stabilizer Theorem, along with the well-known fact that a Galois group acts transitively on the prime divisors of a fixed prime in the base field, implies the orbits are in bijection with the set of places of \(F\) over \(p\).  Hence, we need only check that the fiber over \(\mathcal{D}gG_F\in\mathcal{O}\) in the projection \(S\rightarrow\mathcal{O}\) is of cardinality \(f_\nu\), where \(f_\nu=[\kappa(\nu):\mathbb{F}_p]\), and \(\nu\) is the place of \(F\) under \(g^{-1}\omega\).  This is again made transparent with the Orbit-Stabilizer Theorem.  Indeed, if \(C=<c>\) and \(\overline{g}:=\mathcal{I}gG_F\in\mathcal{O}\), we note that \(c^m\overline{g}=\overline{g}\) if and only if \(g^{-1}\mathcal{I}c^mg\cap G_F\neq\emptyset\), so that $$|C\overline{g}|=\frac{|C^g|}{|C^g\cap(\mathcal{I}^g(\mathcal{D}^g\cap G_F))|}=f_\nu,$$ since \(\mathcal{I}^g(\mathcal{D}^g\cap G_F)\) is the kernel of the composition \(\mathcal{D}^g\rightarrow\mathfrak{g}_{g^{-1}\omega/p}\rightarrow\mathfrak{g}_{\nu/p}\).
\end{proof}

\subsection{Applications}
We are now in a position to give a constructive proof Perlis' Theorem and prove the strong multiplicity one type theorems for Kronecker and weak Kronecker equivalence.  The proofs are broken into two parts: we study Frobenii of unramified primes to derive a conjugacy class condition from a splitting type property in (\textbf{Part 1}), then we use this class condition to determine the splitting type property for ramified primes in (\textbf{Part 2}).  Throughout, we take \(K\) to be a Galois extension of \(\mathbb{Q}\), containing \(K_1\) and \(K_2\).
\subsubsection{Arithmetic Equivalence (Perlis' Theorem)}

\begin{theorem}[\cite{perlis_equation_1977} Theorem 1.b]\label{qeq}
		If \(K_1\) and \(K_2\) are arithmetically equivalent, then \(S_{K_1}(p)=S_{K_2}(p)\) for any rational prime \(p\).
	\end{theorem}
	\begin{proof}
	(\textbf{Part 1}) Let \(F\) be a subfield of \(K\) and \(p\) an unramified rational prime.  If \(\mathcal{F}_p\) is the Frobenius class for \(p\) in \(G\), and \(z_p\) denotes the size \(|Z_G(c)|\) of the centralizer in \(G\) of an element \(c\) in \(\mathcal{F}_p\), it is a classical observation of Gassmann \cite{gassmann_1926} that 
	$$S_F(p)(1)|G_F|=|\mathcal{F}_p\cap G_F|z_p.$$  
	For a concise treatment, see Lemma 1 in \cite{perlis_equation_1977}.  In particular, given \(g\in G\), there is a prime \(p_g\) unramified in \(F\), whose Frobenius class is \(g^G\), and we may write
	\begin{equation}\label{almostconj}
	|g^G\cap G_F|=\frac{|G_F|}{z_{p_g}}S_F(p_g)(1).
	\end{equation}
		(\textbf{Part 2} ) Let \(q\) be a rational prime that ramifies in \(F\). Writing \(C\mathcal{I}=\mathcal{D}_{\omega/q}\), we identify \(S_F(q)\) with the multiset of orbit sizes in the natural action of \(C\) on \(\mathcal{I}\backslash G/G_F\), according to Proposition \ref{staction}.  If \(S_{m,q}=|C^m\mathcal{I}\backslash G/G_F|\), and \((V,\rho)\) is the representation induced by the right regular action of \(G\) on \(C^m\mathcal{I}\backslash G\), then
		\begin{equation}\label{fixeddim}
		S_{m,q}=\dim V^{G_F}=\frac{1}{|G_F|}\sum\limits_{g^G\in[G]}|g^G\cap G_F|\chi_{q,m}(g),
		\end{equation}  
		where \([G]\) is the set of conjugacy classes in \(G\), \(V^{G_F}\subset V\) is the subspace of \(G_F\)-invariant vectors, and \(\chi_{q,m}\) is the character of \(\rho\).  Combining Equations \eqref{almostconj} and \eqref{fixeddim}, we conclude
		\begin{equation}\label{almost}
		S_{m,q}=\sum\limits_{g^G\in [G]}\frac{S_F(p_g)(1)}{z_{p_g}}\chi_{q,m}(g).
		\end{equation}
		Observe now that \(S_{m,q}\) is equal to the number of orbits in the action of \(C^m\) on \(\mathcal{I}\backslash G/G_F\), so by Theorem \ref{formula} and Equation \eqref{almost}
		$$
		S_F(q)(m)=\sum\limits_{d|\frac{n}{m}}\frac{\mu(d)}{\varphi(md)}\sum\limits_{\delta|md}\mu(\delta)\sum\limits_{g^G\in[G]}\frac{S_F(p_g)(1)}{z_{p_g}}\chi_{q,md/\delta}(g).
		$$
		In particular, if \(S_{K_1}(p)=S_{K_2}(p)\) for every unramified prime \(p\), then \(S_{K_1}(p_g)=S_{K_2}(p_g)\) for each \(g\in G\), and therefore \(S_{K_1}(q)=S_{K_2}(q)\) for ramified primes \(q\).
	\end{proof}

\subsubsection{Kronecker and Weak Kronecker Equivalence}
Given a character \(\chi\) on \(G\), we exploit the fact that \(\chi\) is a class function and abuse notation by writing \(\chi(g^G):=\chi(g)\).  We similarly abuse the order function \(o\).  Recall that \(\Ind_H^G(1_H)(g)\neq 0\) if and only if \(g^G\cap H\neq\emptyset\), where \(1_H\) is the trivial representation for \(H\). The next lemma is useful and follows at once from Proposition \ref{staction}.

\begin{lemma}\label{charactersandresidues}
If \(m\) is a positive integer, there is \(g\in G\) such that $$g\mathcal{I}c^mg^{-1}\cap G_F\neq\emptyset$$ if and only if there is \(f\in S_F(p)\) such that \(f|m\).  In particular, if \(p\) is unramified in \(K\), then \(\chi_F(\mathcal{F}_p^m)\neq 0\) if and only if there is \(f\in S_F(p)\) such that \(f|m\), where \(\chi_F=\Ind_{G_{F}}^G(1_{G_F})\), and \(\mathcal{F}_p\subset G\) is the Frobenius class of \(p\).
\end{lemma}

\begin{theorem}[\cite{klingen_zahlkorper_1978} II, \cite{klingen_arithmetical_1998} III Theorem 2.9]\label{KrEq}
		If \(K_1\) and \(K_2\) are Kronecker equivalent, then for any rational prime \(p\), \(1\in S_{K_1}(p)\) if and only if \(1\in S_{K_2}(p)\).
\end{theorem}
\begin{proof}
(\textbf{Part 1}) For every \(g\in G\), there is a rational prime \(p\) unramified in \(K\) whose Frobenius class \(\mathcal{F}_p\) is precisely the conjugacy class of \(g\) in \(G\), so that \(1\in S_{F}(p)\) if and only if \(\chi_F(g)\neq 0\), by Lemma \ref{charactersandresidues}.  Since \(K_1\) and \(K_2\) are Kronecker equivalent, we conclude that \(\chi_{K_1}(g)\neq 0\) if and only if \(\chi_{K_2}(g)\neq 0\).  Therefore, 
	\begin{equation}\label{covering}
	\bigcup\limits_{g\in G_{K_1}}g^G=\bigcup\limits_{g\in G_{K_2}}g^G.
	\end{equation}
(\textbf{Part 2}) Suppose \(m\) is an integer divisible by some element of \(S_{K_1}(p)\), so there is a place \(\nu\) of \(K_1\) such that \(c^m\in\mathcal{I}\mathcal{D}_{\omega/\nu}\), by Lemma \ref{charactersandresidues}.  In particular, there are \(s\in\mathcal{I}\) and \(t\in\mathcal{D}_{\omega/\nu}\) such that \(c^m = st\).  Now, \(t\in G_{K_1}\), so there is \(g\in G\) such that \(g tg^{-1}\in G_{K_2}\), by Equation \eqref{covering}.  But then, if \(\omega'=g\omega\), we also know that \(g tg^{-1}\in\mathcal{D}_{\omega'/p}\), and \(g sg^{-1}\in\mathcal{I}_{\omega'/p}\). Hence \(g c^m g^{-1}\in \mathcal{I}_{\omega'/p}\mathcal{D}_{\omega'/\nu'}\), where \(\nu'\) is the place of \(K_2\) divisible by \(\omega'\).  Therefore, there is a divisor of \(m\) in \(S_{K_2}(p)\), again by Lemma \ref{charactersandresidues}.  By symmetry, we conclude that, for \(m\in\mathbb{Z}\), there is a divisor of \(m\) in \(S_{K_1}(p)\) if and only if there is a divisor of \(m\) in \(S_{K_2}(p)\).  For \(m=1\), this means \(1\in S_{K_1}(p)\) if and only if \(1\in S_{K_2}(p)\).
\end{proof}

We use an elementary lemma in the proof of the last theorem.
\begin{lemma}\label{SylowPullBack}
If \(\pi:G\rightarrow H\) is an epimorphism of finite groups, then a subgroup of \(H\) is Sylow \(p\) if and only if it is of the form \(\pi(G_p)\), where \(G_p\) is a Sylow \(p\)-subgroup of \(G\).
\end{lemma}

\begin{theorem}[\cite{lochter_weakly_1994}]
		If \(K_1\) and \(K_2\) are weakly Kronecker equivalent, then \(\gcd(S_{K_1}(p))=\gcd(S_{K_2}(p))\) for any rational prime \(p\).
	\end{theorem}
	\begin{proof}
	(\textbf{Part 1}) Let \(\delta_{i,p}=\gcd(S_{K_i}(p))\). If \(p\) is a rational prime unramified in \(K\), then \(\delta_{1,p}=\delta_{2,p}\).  Furthermore, if \(o(\mathcal{F}_p)\) is a \(q\)-power for some prime \(q\), then \(\delta_{i,p}\) is itself a residue over \(p\), so that \(\chi_{K_i}(\mathcal{F}_p^n)\neq 0\) if and only if \(\delta_{i,p}|n\), by Lemma \ref{charactersandresidues}.  In particular, \(\chi_{K_1}(\mathcal{F}_p)\neq 0\) if and only if \(\chi_{K_2}(\mathcal{F}_p)\neq 0\).  Hence, by Chebotarev density, if \(g\) is an element of a Sylow \(q\)-subgroup of \(G\), then \(G_{K_1}\) intersects the \(G\)-conjugacy class of \(g\) if and only if \(G_{K_2}\) does.  In other words, if \(G_{K_i,q}\) is a Sylow \(q\)-subgroup of \(G_{K_i}\), then
	\begin{equation}\label{qcovering}
	\bigcup\limits_{g\in G_{K_1,q}}g^G=\bigcup\limits_{g\in G_{K_2,q}}g^G.
	\end{equation}  
	(\textbf{Part 2}) Given a prime \(q\), Lemma \ref{charactersandresidues} tells us that \(\delta_{i,p}\) is not divisible by \(q^k\) if and only if there is a place \(\omega\) of \(K\) such that \(c^{n_qq^{k-1}}\in\mathcal{I}(\mathcal{D}\cap G_{K_i})\), where \(n_q=o(c)/q^{v_q(o(c))}\).  But then \(c^{n_qq^{k-1}}\) projects to an element of \(q\)-power order in \(\mathfrak{g}_{\omega/\nu}\), so by Lemma \ref{SylowPullBack}, we may write \(c^{n_qq^{k-1}}=\iota\gamma\) for some \(\iota\in\mathcal{I}\) and \(\gamma\) an element of a Sylow \(q\)-subgroup of \(\mathcal{D}_{\omega/\nu}\), where \(\nu\) is the place of \(K_i\) under \(\omega\).  We can then use Equation \eqref{qcovering}, along with the conjugation trick from the proof of Theorem \ref{KrEq} to deduce that \(q^k|\delta_{1,p}\) if and only if \(q^k|\delta_{2,p}\).  Hence, \(\delta_{1,p}\) and \(\delta_{2,p}\) have the same prime divisors, with the same multiplicity, so \(\delta_{1,p}=\delta_{2,p}\).
	\end{proof}
	
	\begin{remark}
	We conclude by remarking that the group-theoretic conditions derived in (\textbf{Part 1}) of the proofs above are classical (c.f. \cite{gassmann_1926, klingen_zahlkorper_1978, lochter_weakly_1994}).  The new insight is how the Extended Hasse Lemma allows this condition to be leveraged over the ramified primes in each case, in (\textbf{Part 2}).
	\end{remark}

\bibliography{ConstructiveProofs}
\bibliographystyle{alpha}
\end{document}